\numberwithin{equation}{section}
\numberwithin{figure}{section}
\theoremstyle{plain}
\newtheorem{thm}{\protect\theoremname}
\theoremstyle{remark}
\newtheorem{rem}[thm]{\protect\remarkname}
\theoremstyle{plain}
\newtheorem{prop}[thm]{\protect\propositionname}
\theoremstyle{plain}
\newtheorem{lem}[thm]{\protect\lemmaname}
\theoremstyle{plain}
\newtheorem{cor}[thm]{\protect\corollaryname}
\newcommand{\warning}{%
 \makebox[1.4em][c]{%
 \makebox[0pt][c]{\raisebox{.1em}{\small!}}%
 \makebox[0pt][c]{\color{red}\Large$\bigtriangleup$}}}%
\renewcommand{\geq}{\geqslant}
\renewcommand{\leq}{\leqslant}
\newtheorem*{main-theorem}{Main Theorem}\newtheorem*{theorem*}{Theorem}
\theoremstyle{definition}
\newtheorem*{remark*}{Remark}
\numberwithin{equation}{section}
\def\phi{\varphi}
\def\reals{{\mathbb R}}
\def\cx{{\mathbb C}}
\def\Ci{{\mathcal C}^\infty}
\def\Re{\,\mathrm{Re}\,}
\def\Im{\,\mathrm{Im}\,}
\def\phi{\varphi}
\def\Spec{\operatorname{Spec}\,}
\def\be{\begin{eqnarray*}}
\def\ee{\end{eqnarray*}}
\def\ben{\begin{eqnarray}}
\def\een{\end{eqnarray}}
\def\ker{\text{ker}}
\def\MM{\mathcal{M}}
\def\lll{\left\langle}
\def\rrr{\right\rangle}
\def\L2R{L_{\text{Rest}}^2}
\def\11{\mathds{1}}
\def\HH{\mathcal{H}}
\def\RR{\mathcal{R}}
\def\L2c{L^2_{\text{comp}}}
\def\C{\mathcal{C}}
\def\11{\mathbb{1}}
\def\p{\partial}
\def\Ombar{\overline{\Omega}}
\def\Tr{\text{Tr}}
\providecommand{\corollaryname}{Corollary}
\providecommand{\lemmaname}{Lemma}
\providecommand{\propositionname}{Proposition}
\providecommand{\remarkname}{Remark}
\providecommand{\theoremname}{Theorem}
\providecommand{\corollaryname}{Corollary}
\providecommand{\lemmaname}{Lemma}
\providecommand{\propositionname}{Proposition}
\providecommand{\remarkname}{Remark}
\providecommand{\theoremname}{Theorem}
\begin{document}
\global\long\def\scal#1#2{\langle#1,#2\rangle}%

\title{Wave Decay with Singular Damping}
\author{Hans Christianson}
\author{Emmanuel Schenck}
\author{Michael Taylor}
\begin{abstract}
We consider the stabilization problem on a manifold with boundary
for a wave equation with measure-valued linear damping. For a wide
class of measures, containing Dirac masses on hypersurfaces as well
as measures with fractal support, we establish an abstract energy decay
result. 
\end{abstract}

\maketitle

\section{Introduction}

In this paper, we consider a damped wave equation with measure-valued
damping on a compact Riemannian manifold $(\Omega,g)$ with non-empty
smooth boundary. If the measure is the Dirac mass on a hypersurface,
then this problem has been considered on an interval in \cite{BRT-D-damp,JTZ-D-damp}
and on bounded domains in $\reals^{2}$, provided the hypersurface
stays away from the boundary. In particular, in \cite{JTZ-D-damp},
it is shown that generically for curved hypersurfaces, the energy
of corresponding solutions decays to $0$. However, if the domain
is strictly convex, the existence of whispering gallery modes highly
concentrated along the boundary \cite{Ral69_loc} shows that the decay
rate is not uniform.

In this paper, we develop a functional analysis approach, assuming
that the measure has good mapping properties between (generalized)
Sobolev spaces. The novelty here is that the class of measures considered
here is much wider than restrictions of the Riemannian volume to hypersurfaces
(see Section \ref{S:compact-measure-mapping}). This mapping assumption
allows us to prove an abstract energy decay result rather directly,
in any dimension $n\geq2$. 

Let $-\Delta_{g}$ be the (positive) Laplace-Beltrami operator on
$\Omega$. Let $dV_{g}$ denote the Riemannian volume element, and
let $\lll\cdot,\cdot\rrr_{L^{2}(\Omega)}$ denote the $L^{2}$ inner
product with respect to $dV_{g}$, with the convention that it is
linear in the first argument. Since the boundary of $\Omega$ is non-empty,
we prescribe Dirichlet boundary conditions to $-\Delta_{g}$, so that
the domain is 
\[
D(-\Delta_{g})=H^{2}(\Omega)\cap H_{0}^{1}(\Omega).
\]
Here $H_{0}^{1}(\Omega)$ is the completion of $\Ci_{c}(\Omega)$
with respect to the homogeneous norm 
\[
\|u\|_{H_{0}^{1}(\Omega)}=\lll\nabla u,\nabla u\rrr_{L^{2}(\Omega)}^{1/2}.
\]
We observe that the Dirichlet boundary conditions imply that for $u\in\Ci_{c}(\Omega)$,
\begin{align*}
0 & =\|u\|_{H_{0}^{1}(\Omega)}^{2}=\langle\nabla u,\nabla u\rangle_{L^{2}(\Omega)}\Leftrightarrow\ u\text{ is constant,}
\end{align*}
so this is a norm. In fact, we have 
\begin{align*}
\|u\|_{H_{0}^{1}(\Omega)}^{2} & =\langle-\Delta u,u\rangle_{L^{2}(\Omega)}\geq\lambda_{0}\|u\|_{L^{2}(\Omega)}^{2},
\end{align*}
where $\lambda_{0}>0$ is the first Dirichlet eigenvalue (necessarily
positive).

Let $M:\mathcal{C}_{c}\to\mathcal{C}_{c}^{*}$ be a symmetric, non-negative
linear operator which extends to a symmetric, non-negative, bounded
mapping $M:H_{0}^{1}(\Omega)\to H^{-1}(\Omega)$. Such operators will
be the most general ``friction'' coefficients we consider in this
paper. For the functional analysis approach developed here, $M$ need
not be a local operator, but for applications we are most interested
in the case where $M$ is multiplication by a non-negative measure. 

Let $\langle\cdot,\cdot\rangle_{H^{-1}\times H^{1}}$ be the duality
product between $H^{-1}(\Omega)$ and $H^{1}(\Omega)$ so that for
$g\in H^{1}(\Omega)$, 
\[
M(f)(g)=\langle M(f),g\rangle_{H^{-1}\times H^{1}}.
\]
In order to use $M$ in a wave equation with singular friction, we
will write $\langle M(f),g\rangle$ as a shorthand for $\langle M(f),g\rangle=\langle M(f),\overline{g}\rangle_{H^{-1}\times H^{1}}$
. In the case where $\mu$ is a non-negative measure on $\Omega$,
let $M=M_{\mu}$ denote the ``multiplication'' operator $M_{\mu}:\mathcal{C}_{c}\to\mathcal{C}_{c}^{*}$
given by 
\[
M_{\mu}(f)(g)=\int_{\Omega}fgd\mu,\qquad\langle M_{\mu}f,g\rangle=\int_{\Omega}f\bar{g}d\mu.
\]
Through this paper, we will make the following central assumption
: 
\begin{equation}
M:H_{0}^{1}(\Omega)\longrightarrow H^{-1}(\Omega)\text{ is a compact operator.}\label{eq: main assumption}
\end{equation}

Notice no assumption on the support of $\mu$ is made yet, or on the
regularity. For example, $\mu$ need not be absolutely continuous
with respect to $dV_{g}$ : see Section \eqref{S:compact-measure-mapping}
where several examples of such measures are given.

We will be interested in a wave equation with $M$-damping: 
\begin{equation}
\begin{cases}
(\p_{t}^{2}-\Delta+M\p_{t})u=0,\text{ on }(0,\infty)_{t}\times\Omega\\
u|_{\p\Omega}=0,\text{ on }[0,\infty)_{t}\times\p\Omega\\
u(0,x)=g(x),\,\,u_{t}(0,x)=f(x).
\end{cases}\label{E:sing-damp}
\end{equation}
The natural space for the initial data $(f,g)$ is\footnote{In fact, we will take a slightly smaller subspace for our initial
data, namely $(f,g)^{T}\in D(G)$, where $G$ is the matrix operator
defined in \eqref{E:G-def}.} 
\[
\HH=L^{2}(\Omega)\oplus H_{0}^{1}(\Omega).
\]

We will say that $u\in C((0,\infty)_{t})H_{0}^{1}(\Omega)$ is a weak
solution to \eqref{E:sing-damp} if for every $\phi\in H_{0}^{1}(\Omega)$,
\[
\lll\p_{t}^{2}u,\phi\rrr_{L^{2}(\Omega)}+\lll\nabla u,\nabla\phi\rrr_{L^{2}(\Omega)}+\lll M\partial_{t}u,\phi\rrr=0.
\]
As a first result, we prove the following:
\begin{thm}
\label{T:m-damp-1} Suppose the linear operator $M$ extends to a
bounded linear mapping $M:H_{0}^{1}(\Omega)\to H^{-1}(\Omega)$, and
that this mapping is compact. Then $G$ is maximally dissipative,
and $(I-G)^{-1}:\HH\to\HH$ is compact. In particular, $G$ generates
a semigroup of contractions $e^{tG}:\HH\to\HH$, $t\geq0$. 
\end{thm}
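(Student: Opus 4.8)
The plan is to realize \eqref{E:sing-damp} as a first-order system $\p_t U = GU$ with $U = (u_t,u)^T$, where $G(v,u)^T = (\Delta u - Mv,\, v)^T$ on the domain
\[
D(G) = \bigl\{(v,u)^T \in \HH \ :\ v \in H_0^1(\Omega),\ \ \Delta u - Mv \in L^2(\Omega)\bigr\},
\]
the expression $\Delta u - Mv$ being read a priori in $H^{-1}(\Omega)$ — which is meaningful, since $u \in H_0^1(\Omega)$ forces $\Delta u \in H^{-1}(\Omega)$ while the standing hypothesis gives $Mv \in H^{-1}(\Omega)$ for $v \in H_0^1(\Omega)$. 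I would then verify the three hypotheses of the Lumer--Phillips theorem — $G$ densely defined, dissipative, and $I-G$ surjective — which gives maximal dissipativity and the contraction semigroup $e^{tG}$, and obtain compactness of $(I-G)^{-1}$ separately from the solution formula.

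For dissipativity, take $U = (v,u)^T \in D(G)$. Then $\langle GU, U\rangle_\HH = \langle \Delta u - Mv, v\rangle_{L^2(\Omega)} + \langle \nabla v, \nabla u\rangle_{L^2(\Omega)}$; since $\Delta u$ and $Mv$ lie separately in $H^{-1}(\Omega)$ and $v \in H_0^1(\Omega)$, additivity of the $H^{-1}\times H^1$ pairing and integration by parts yield $\langle \Delta u - Mv, v\rangle_{L^2(\Omega)} = -\langle \nabla u, \nabla v\rangle_{L^2(\Omega)} - \langle Mv, v\rangle$. The two gradient terms are complex conjugates, so they cancel in the real part and $\Re\langle GU, U\rangle_\HH = -\Re\langle Mv, v\rangle \leq 0$ by non-negativity of $M$.

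The main step is surjectivity of $I - G$. Given $(f_1, f_2)^T \in \HH$, eliminating $v = u - f_2$ turns $(I-G)(v,u)^T = (f_1,f_2)^T$ into the single equation
\[
(I - \Delta + M)\,u \ =\ f_1 + f_2 + Mf_2 \quad\text{in } H^{-1}(\Omega), \qquad u \in H_0^1(\Omega),
\]
whose right-hand side lies in $H^{-1}(\Omega)$. As $I - \Delta : H_0^1(\Omega)\to H^{-1}(\Omega)$ is an isomorphism and, by the central assumption \eqref{eq: main assumption}, $(I-\Delta)^{-1}M$ is a compact endomorphism of $H_0^1(\Omega)$, the operator $I-\Delta+M$ is Fredholm of index zero, hence invertible once injective; and injectivity follows by pairing $(I-\Delta+M)u=0$ with $\bar u$ and taking real parts, which gives $\|u\|_{L^2(\Omega)}^2 + \|\nabla u\|_{L^2(\Omega)}^2 + \Re\langle Mu, u\rangle = 0$ with all terms nonnegative, so $u=0$. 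The resulting $u$ satisfies $\Delta u - Mv = u - f_1 - f_2 \in L^2(\Omega)$, so $(v,u)^T \in D(G)$ and $(I-G)(v,u)^T = (f_1,f_2)^T$. Density of $D(G)$ in $\HH$ is routine — for fixed $v \in \Ci_c(\Omega)$ the admissible $u$'s form a translate of the dense subspace $H^2(\Omega)\cap H_0^1(\Omega)$ of $H_0^1(\Omega)$, and $\Ci_c(\Omega)$ is dense in $L^2(\Omega)$ — and is in any case automatic from dissipativity plus the range condition since $\HH$ is reflexive. Lumer--Phillips then applies.

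Finally, the construction gives $(I-G)^{-1}(f_1,f_2)^T = (u - f_2,\, u)^T$ with $u = (I-\Delta+M)^{-1}(f_1 + f_2 + Mf_2)$. The map $(f_1,f_2)^T \mapsto f_1 + f_2 + Mf_2$ is compact from $\HH$ into $H^{-1}(\Omega)$, since $L^2(\Omega)\hookrightarrow H^{-1}(\Omega)$ and $H_0^1(\Omega)\hookrightarrow H^{-1}(\Omega)$ are compact by Rellich and $M:H_0^1(\Omega)\to H^{-1}(\Omega)$ is compact by hypothesis; composing with the bounded inverse $(I-\Delta+M)^{-1}:H^{-1}(\Omega)\to H_0^1(\Omega)$ makes $(f_1,f_2)^T\mapsto u$ compact into $H_0^1(\Omega)$, so the second component of $(I-G)^{-1}$ is compact, and composing once more with the compact embedding $H_0^1(\Omega)\hookrightarrow L^2(\Omega)$ (and using that $(f_1,f_2)^T\mapsto f_2$ is likewise compact into $L^2(\Omega)$) handles the first; hence $(I-G)^{-1}:\HH\to\HH$ is compact. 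I expect the surjectivity step to be the real obstacle: it is exactly where the compactness assumption \eqref{eq: main assumption} is indispensable, turning solvability of $(I-\Delta+M)u=\text{data}$ into an injectivity question that the strict positivity of $-\Delta$ under Dirichlet conditions and the non-negativity of $M$ then settle. The rest is bookkeeping — pinning down $D(G)$ and the various $H^{-1}\times H^1$ pairings precisely enough to license the integrations by parts.
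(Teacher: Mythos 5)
Your proposal is correct and follows essentially the same route as the paper: the same first-order system with $G=\begin{pmatrix}-M&\Delta_{g}\\ I&0\end{pmatrix}$, the same dissipativity computation, the same elimination reducing $(I-G)U=F$ to $(1-\Delta_{g}+M)u=f_{1}+(1+M)f_{2}$ solved by the Fredholm index-zero plus injectivity argument, and the same compactness argument for $(I-G)^{-1}$ via the compact map $(f_{1},f_{2})\mapsto f_{1}+f_{2}+Mf_{2}$ composed with $(1-\Delta_{g}+M)^{-1}$ and the Rellich embeddings. The only addition is your explicit description of $D(G)$ and the verification that it is dense (which the paper leaves implicit); this is a refinement, not a different approach.
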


\begin{rem}
As the proof will indicate, we will also describe the domain $D(G)$
by describing the domain of $(I-G)$. The strategy is to show that
the real part of $G$ is non-positive definite, then solve the Dirichlet
problem. This will compute $D(G)$, show that $(I-G)$ is invertible
on appropriate Hilbert spaces, and that the inverse is a compact operator.
\end{rem}

The energy for this equation is the usual wave energy: 
\[
E(u,t)=\int_{\Omega}(|u_{t}|^{2}+|\nabla u|^{2})dV_{g}.
\]
Since $M$ is symmetric and non-negative, if $(f,g)^{T}\in D(G)$,
then we compute 
\begin{align*}
E'(u,t) & =2\Re\int_{\Omega}\bar{u_{t}}(u_{tt}-\Delta_{g}u)dV_{g}\\
 & =2\Re\lll u_{tt}-\Delta_{g}u,u_{t}\rrr\\
 & =2\Re\lll-Mu_{t},u_{t}\rrr\\
 & =-2\lll Mu_{t},u_{t}\rrr\\
 & \leq0.
\end{align*}
It is a standard question to determine whether and how this energy
decays to 0 asymptotically in time. As usual, we make the distinction
between strong stabilization, where the energy of every solution to
\eqref{E:sing-damp} goes to 0 as $t\to\infty$ (or, equivalently,
$e^{tG}\xrightarrow{t\to+\infty}0$ strongly in $\HH$) and uniform
stabilization, where there is $f(t)\xrightarrow{t\to+\infty}0$ such
that for all solution $u$ of \eqref{E:sing-damp}, $E(u,t)\leq f(t)$
at all times $t\geq0$. In this latter case, a standard argument shows
that the decay is actually exponential. To determine whether stabilization
holds involves necessarily additional assumptions of geometrical nature:
for instance, it is clear that if the support of $\mu$ is localized
on a nodal set of a Dirichlet eigenfunction of $-\Delta_{g}$, there
are solutions of \eqref{E:sing-damp} with constant energy, and strong
stabilization does not hold. On the other hand, for the standard damped
wave equation -- that is, with a smooth damping term of the form
$au_{t}$ with non-zero $a\in C^{\infty}(\Omega,\reals_{+})$, the
geometric control hypothesis ensures a uniform decay \cite{BLR92} following \cite{RaTa-74} for the case of empty boundary.
In the present case of singular damping, our analysis does not allow
to explore this question further than a spectral condition (Theorem
\ref{thm: Gap} below). However, our main result give a natural condition
for strong stabilization to hold: 
\begin{thm}
\label{thm: stabilization}Let $\mu$ be a finite positive measure
on $\overline{\Omega}$ such that \eqref{eq: main assumption} holds.
For $\lambda\in\Spec(-\Delta_{g})$, denote by $\psi_{\lambda}\in L^{2}(\Omega)$
any eigenfunction with eigenvalue $\lambda$. The following statements
are equivalent :

(i) The semigroup $e^{tG}$ satisfies $e^{tG}\xrightarrow{t\to+\infty}0$
strongly.

(ii) For any $\lambda\in\Spec(-\Delta_{g})$, and any eigenfunction
$\psi_{\lambda}\in L^{2}(\Omega)$ with eigenvalue $\lambda$, the
measure $\mu$ satisfies 

\begin{equation}
\int_{\Omega}|\psi_{\lambda}|^{2}d\mu>0.\label{eq: nodal}
\end{equation}
In other words, the energy of any solution of \eqref{E:sing-damp}
decays to 0 if and only if \eqref{eq: nodal} holds true.
\end{thm}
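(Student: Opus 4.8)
The plan is to establish the equivalence via the Arendt--Batty--Lyubich--V\'u theorem on strong stability of bounded semigroups, which states that if $e^{tG}$ is a bounded $C_0$-semigroup on a reflexive Banach space whose generator $G$ has no eigenvalues on the imaginary axis and whose spectrum meets $i\reals$ in a countable set, then $e^{tG}\to 0$ strongly. Since Theorem \ref{T:m-damp-1} gives $(I-G)^{-1}$ compact, the spectrum of $G$ is discrete, consisting of eigenvalues of finite multiplicity with no finite accumulation point; in particular $\Spec(G)\cap i\reals$ is automatically countable (indeed finite on any bounded interval). Thus the abstract hypotheses reduce to the single condition that $G$ has no purely imaginary eigenvalue, and the work is to show that this spectral condition is equivalent to the nodal condition \eqref{eq: nodal}.

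First I would carry out the direction showing that failure of \eqref{eq: nodal} produces an imaginary eigenvalue, hence (by ABLV, or directly) destroys strong stability. Suppose there is $\lambda\in\Spec(-\Delta_g)$ and an eigenfunction $\psi_\lambda$ with $\int_\Omega|\psi_\lambda|^2\,d\mu=0$. Set $u(t,x)=e^{i\sqrt{\lambda}\,t}\psi_\lambda(x)$; since the damping contributes $M\partial_t u = i\sqrt\lambda\, e^{i\sqrt\lambda t} M\psi_\lambda$, and we need this to vanish in the weak formulation, I would first check that $\int|\psi_\lambda|^2 d\mu = 0$ together with non-negativity of $\mu$ forces $\psi_\lambda = 0$ $\mu$-a.e., hence $M_\mu\psi_\lambda = 0$ as an element of $H^{-1}$. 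Then $u$ is a genuine solution of \eqref{E:sing-damp} with constant energy $E(u,t)=\sqrt\lambda\|\psi_\lambda\|_{L^2}^2 + \|\nabla\psi_\lambda\|_{L^2}^2 \neq 0$, so the semigroup does not tend to $0$ strongly; equivalently $(i\sqrt\lambda, (\psi_\lambda, \text{suitable second component}))$ is an eigenpair of $G$. (One must be slightly careful with $\lambda = 0$, which is excluded here since $\lambda_0 > 0$, so $\sqrt\lambda$ is always a nonzero real.)

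The substantive direction is the converse: assuming \eqref{eq: nodal} holds for every eigenfunction, show $G$ has no eigenvalue on $i\reals$. Suppose $Gw = i\tau w$ for some real $\tau$ and $0 \neq w = (f,g)^T \in D(G)$. Unpacking the matrix form of $G$, the top equation gives $f = i\tau g$ and the bottom gives $\Delta g - M f = i\tau f$, i.e. $\Delta g = i\tau f + Mf = -\tau^2 g + i\tau M g$. Pairing with $g$ in the $H^{-1}\times H^1$ duality and taking the real part, the $\Delta g$ and $-\tau^2 g$ terms are real while $\Re\langle i\tau M g, g\rangle = 0$ only if... no: rather $\Re\langle i\tau Mg,g\rangle$, using that $M$ is symmetric and non-negative so $\langle Mg,g\rangle \geq 0$ is real, equals $0$, which gives no information; instead I take the \emph{imaginary} part, which isolates $\tau\langle Mg,g\rangle = 0$, forcing $\langle Mg,g\rangle = 0$, i.e. $\int_\Omega |g|^2\,d\mu = 0$ in the measure case. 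Feeding this back, $Mg = 0$ in $H^{-1}$, so $g$ solves $\Delta g = -\tau^2 g$ with Dirichlet conditions, i.e. $g$ is a Dirichlet eigenfunction with eigenvalue $\tau^2 \in \Spec(-\Delta_g)$ (and $g \neq 0$ since $w \neq 0$ and $f = i\tau g$; if $\tau = 0$ then $f = 0$ and $\Delta g = 0$ forces $g = 0$, contradiction). But then $g$ is an eigenfunction with $\int_\Omega |g|^2 d\mu = 0$, contradicting \eqref{eq: nodal}. Hence no imaginary eigenvalue exists, ABLV applies, and strong stability holds.

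The main obstacle I anticipate is purely technical rather than conceptual: justifying the duality pairings and the passage from $\langle Mg,g\rangle = 0$ to $Mg = 0$ in $H^{-1}(\Omega)$. In the measure case this is the implication $\int|g|^2 d\mu = 0 \Rightarrow g = 0$ $\mu$-a.e. $\Rightarrow \int g\bar\phi\, d\mu = 0$ for all $\phi$, which is Cauchy--Schwarz for the non-negative measure $\mu$ and is immediate; for the abstract symmetric non-negative $M$ one uses the Cauchy--Schwarz inequality for the non-negative sesquilinear form $(u,v)\mapsto\langle Mu,v\rangle$ to get $|\langle Mg,\phi\rangle|^2 \leq \langle Mg,g\rangle\langle M\phi,\phi\rangle = 0$. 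A second point requiring care is confirming that the ABLV countability hypothesis is genuinely satisfied and that $\HH$ is reflexive (it is a Hilbert space), and that boundedness of the semigroup is exactly the contraction property from Theorem \ref{T:m-damp-1}. None of these should present real difficulty given the compactness of the resolvent already in hand.
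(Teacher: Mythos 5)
Your argument is correct, and its mathematical core coincides with the paper's: the decisive step in both is the characterization of purely imaginary eigenvalues --- pair the eigenvalue relation $\Delta g=-\tau^{2}g+i\tau Mg$ with $g$, take the imaginary part to get $\tau\langle Mg,g\rangle=0$, upgrade $\langle Mg,g\rangle=0$ to $Mg=0$ in $H^{-1}$ by Cauchy--Schwarz for the non-negative form, and conclude that $g$ is a Dirichlet eigenfunction vanishing $\mu$-a.e.\ (this is exactly Corollary \ref{cor: imag M} and the discussion following it); conversely, when \eqref{eq: nodal} fails one gets the constant-norm eigenvector $(i\sqrt{\lambda}\,\psi_{\lambda},\psi_{\lambda})^{T}$ with eigenvalue $i\sqrt{\lambda}$, so strong decay fails. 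Where you differ is the packaging of the abstract semigroup input: the paper first proves Proposition \ref{prop: decay}, splitting $\HH=\MM\oplus\MM^{\perp}$ into the closure of the imaginary primitive eigenspaces and its orthogonal complement, showing $e^{tG}$ is unitary on $\MM$, that $\MM^{\perp}$ is invariant with compact resolvent, and applying the cited abstract result (maximal dissipative, compact resolvent, no imaginary eigenvalues $\Rightarrow$ strong decay) only on $\MM^{\perp}$; you instead invoke the Arendt--Batty--Lyubich--V\~{u} theorem directly on all of $\HH$, with countability of $\Spec G\cap i\reals$ supplied by the discreteness of the spectrum from Theorem \ref{T:m-damp-1}. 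For the bare equivalence your route is leaner, and the two stability inputs are interchangeable here (with compact resolvent all spectrum is point spectrum, so absence of imaginary eigenvalues of $G$ gives the same for $G^{*}$, which is what ABLV formally requires); the paper's decomposition buys additional structural information, namely the precise unitary, non-decaying component of the dynamics for arbitrary data. Two harmless slips: the constant energy should read $\lambda\|\psi_{\lambda}\|_{L^{2}}^{2}+\|\nabla\psi_{\lambda}\|_{L^{2}}^{2}$ rather than $\sqrt{\lambda}\|\psi_{\lambda}\|_{L^{2}}^{2}+\|\nabla\psi_{\lambda}\|_{L^{2}}^{2}$, and with the ordering $U=(u_{t},u)^{T}$ the relation $f=i\tau g$ comes from the second row of $G$ and $\Delta g-Mf=i\tau f$ from the first; neither affects the argument.
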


Note that for a regular damping of the form $\mu=a(x)dV_{g}$ with
non-zero $a\in C^{\infty}(\Omega,\reals_{+})$, condition \emph{(ii)
}is always satisfied since an eigenfunction cannot vanish on an open
set by Holmgren's uniqueness theorem. We recover here a standard result,
namely the strong stabilization for (non-trivial) smooth damping.

\section{Measures that multiply $H_{0}^{1}(\Omega)$ to $H^{-1}(\Omega)$\label{S:compact-measure-mapping}}

In this section, let $M$ be a compact, $n$-dimensional Riemannian
manifold, and $\Omega\subset M$ a connected open subset. Let $\mu$
be a finite, positive measure on $\Ombar$. We want to give conditions
that imply 
\begin{equation}
M_{\mu}:H_{0}^{1}(\Omega)\longrightarrow H^{-1}(\Omega),\ \text{ compactly, where}\quad M_{\mu}f=f\mu.\label{A.1}
\end{equation}
It will actually be convenient to let $\mu$ be a positive finite
measure on $M$, and ask when 
\begin{equation}
M_{\mu}:H^{1}(M)\longrightarrow H^{-1}(M),\ \text{ compactly.}\label{A.2}
\end{equation}

Since $H_{0}^{1}(\Omega)$ is a closed linear subspace of $H^{1}(M)$,
we always get \eqref{A.1} from \eqref{A.2}, by restriction (also
restricting $\mu$ to $\Ombar$). We emphasize that the support of
$\mu$ can have nonempty intersection with $\p\Omega$. 

To start with, let us mention conditions for 
\begin{equation}
M_{\mu}:H^{1}(M)\longrightarrow H^{-1}(M),\label{A.30}
\end{equation}
i.e., \eqref{A.2} but disregarding compactness. If we take $f\in H^{1}(M)$
and pair $M_{\mu}f$ with $f$, since $\mu$ is a positive measure,
a necessary condition for \eqref{A.30} is that 
\begin{equation}
\int\limits _{M}|f|^{2}\,d\mu\le C\|f\|_{H^{1}(M)}^{2},\quad\forall\,f\in H^{1}(M).\label{A.31}
\end{equation}
Applying Cauchy's inequality to $\int_{M}f\overline{g}\,d\mu$, we
see that \eqref{A.31} is also sufficient for \eqref{A.30}. Furthermore,
it follows from Theorem 1.2.2 of \cite{MazSha09} that the existence
of $A<\infty$ such that 
\begin{equation}
\mu(S)\le A\,\text{Cap}(S),\quad\forall\,\text{Borel }S\subset M\label{A.32}
\end{equation}
is necessary and sufficient for \eqref{A.31} to hold, hence for \eqref{A.30}
to hold, where $\text{Cap}(S)$ is a variant of electrostatic capacity,
appropriate for this situation.

A general way to obtain compacity for the map \eqref{A.30} proceeds
as follows. If $\dim M=n\ge2$, standard results for pointwise multiplication
in Sobolev spaces \cite{RunSic96} ensure that 

\begin{align}
f,g & \in H^{1}(M)\Rightarrow fg\in H^{1,p}(M),\quad\text{for }p=\frac{n}{n-1}\ \text{ if }\ n\ge3,\label{eq: mult}\\
f,g & \in H^{1}(M)\Rightarrow fg\in H^{1,p}(M),\quad\text{for all }p<2\ \text{ if }\ n=2.\nonumber 
\end{align}

 Since $C(M)$ is dense in the duals of such spaces $H^{1,p}(M)$,
we deduce from the preceding equations that \eqref{A.2} holds under
the following conditions. 
\begin{align}
\mu & \in H^{1,n/(n-1)}(M)^{*}=H^{-1,n}(M),\ \text{ if }\ n\ge3,\label{A.4}\\
\mu & \in H^{-1,r}(M),\ \text{ for some }\ r>2,\ \text{ if }\ n=2.\nonumber 
\end{align}

\subsection{Measures related to hypersurfaces $S\subset M$}

We can use the above discussion to obtain the following class of singular
measures satisfying \eqref{A.2}.
\begin{prop}
\label{P:A.1} Let a compact $S\subset M$ be locally a Lipschitz
graph, of dimension $n-1$, equipped with surface measure, i.e., $(n-1)$-dimensional
Haudorff measure, $\sigma_{S}$. Let 
\begin{equation}
\mu=h\,\sigma_{S},\label{A.5}
\end{equation}
with 
\begin{align}
h & \in L^{n-1}(S,d\sigma_{S}),\quad\text{if }\ n\ge3,\label{A.6}\\
h & \in L^{1+\delta}(S,d\sigma_{S}),\quad\text{for some \ensuremath{\delta>0}, if \ensuremath{n=2}.}\nonumber 
\end{align}
Then \eqref{A.4} holds, hence \eqref{A.2} holds. 
\end{prop}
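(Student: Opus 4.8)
The plan is to establish \eqref{A.4}; that \eqref{A.4} implies \eqref{A.2} was explained in the discussion preceding the proposition. Recall $H^{-1,n}(M)=H^{1,n/(n-1)}(M)^{*}$ when $n\ge3$, while for $n=2$ one has $H^{-1,r}(M)=H^{1,r'}(M)^{*}$ with $r'=r/(r-1)<2$. Since $C^{\infty}(M)$ is dense in $H^{1,n/(n-1)}(M)$ (resp.\ in $H^{1,r'}(M)$), it suffices to show that the functional $u\mapsto\langle\mu,u\rangle=\int_{S}uh\,d\sigma_{S}$, which is well defined for $u\in C^{\infty}(M)$ because $\mu=h\sigma_{S}$ is a finite measure, satisfies
\[
\Big|\int_{S}uh\,d\sigma_{S}\Big|\le C\,\|h\|_{L^{n-1}(S)}\,\|u\|_{H^{1,n/(n-1)}(M)}\qquad(n\ge3),
\]
and the analogous bound with $\|h\|_{L^{1+\delta}(S)}$ and $\|u\|_{H^{1,r'}(M)}$, for a suitable $r'<2$, when $n=2$.

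By Hölder's inequality on $(S,\sigma_{S})$, $\int_{S}|u||h|\,d\sigma_{S}\le\|u|_{S}\|_{L^{q}(S)}\,\|h\|_{L^{q'}(S)}$, so the matter reduces to a \emph{trace estimate}: the restriction map $u\mapsto u|_{S}$ should be bounded from $H^{1,n/(n-1)}(M)$ into $L^{(n-1)/(n-2)}(S)$ when $n\ge3$ --- note $(n-1)/(n-2)$ is the Hölder conjugate of $n-1$ --- and from $H^{1,r'}(M)$ into $L^{q}(S)$ with $q$ as large as we wish, uniformly as $r'\uparrow2$, when $n=2$. This is where the Lipschitz-graph hypothesis enters. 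Cover $S$ by finitely many coordinate charts in each of which $S$ is a Lipschitz graph; using a subordinate partition of unity, reduce to the case where the function is supported in a single chart. In such a chart a bi-Lipschitz change of variables flattens the graph onto a bounded piece of the hyperplane $\{x_{n}=0\}\subset\reals^{n}$. A bi-Lipschitz map preserves $H^{1,p}$ with equivalent norms --- Lipschitz maps are differentiable a.e.\ with essentially bounded Jacobian, so the chain rule holds and $|\nabla(u\circ\Phi)|\lesssim|\nabla u|\circ\Phi$ --- and by the area formula it carries $\sigma_{S}$ to a measure mutually absolutely continuous with $(n-1)$-dimensional Lebesgue measure on $\{x_{n}=0\}$, with densities bounded above and below in terms of the Lipschitz constant. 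We are thereby reduced to the Gagliardo trace--Sobolev inequality on a half-space: for $1\le p<n$, the trace operator maps $H^{1,p}(\reals^{n}_{+})$ boundedly into $L^{(n-1)p/(n-p)}(\reals^{n-1})$.

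With $p=n/(n-1)$ the trace exponent is $(n-1)p/(n-p)=(n-1)/(n-2)$, which is precisely the Hölder conjugate of $n-1$; this gives the claim for $n\ge3$. For $n=2$ the same inequality, applied with $1<r'<2$, puts the trace in $L^{r'/(2-r')}(S)$, and since $r'/(2-r')\to\infty$ as $r'\uparrow2$, given $\delta>0$ we may fix $r'<2$ with $r'/(2-r')\ge(1+\delta)/\delta$; then the Hölder conjugate of the trace exponent is at most $1+\delta$, so pairing against $h\in L^{1+\delta}(S)$ closes the estimate, and the corresponding $r=r'/(r'-1)$ satisfies $r>2$ as \eqref{A.4} demands. (Finiteness of $\mu$ is automatic, since $h\in L^{n-1}(S)\subset L^{1}(S)$, resp.\ $h\in L^{1+\delta}(S)\subset L^{1}(S)$, on the finite-measure space $(S,\sigma_{S})$.)

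The only step I expect to need genuine care is the compatibility of the bi-Lipschitz flattening with the Sobolev structure --- that the change of variables preserves $H^{1,p}$-membership and turns the Hausdorff measure $\sigma_{S}$ into something comparable to flat Lebesgue measure --- but these are classical consequences of Rademacher's theorem and the area formula. The partition of unity, Hölder's inequality, and the Gagliardo trace inequality are routine.
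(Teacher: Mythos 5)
Your proof is correct and takes essentially the same route as the paper: both reduce \eqref{A.4} to the trace estimate $H^{1,n/(n-1)}(M)\to L^{(n-1)/(n-2)}(S)$ (with $p\nearrow 2$, $q\nearrow\infty$ when $n=2$) followed by H\"older's inequality against $h$, with the same exponent bookkeeping. The only difference is packaging: the paper quotes the trace theorem into $B^{s}_{p,p}(S)$ plus the Besov--Lebesgue embedding, noting bi-Lipschitz invariance in a remark, whereas you derive the same trace bound by an explicit partition of unity, bi-Lipschitz flattening, and the Gagliardo half-space trace--Sobolev inequality.
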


\begin{proof}
In view of \eqref{eq: mult}, this amounts to find conditions on $q'>0$
so that 
\[
\int(fg)hd\sigma_{S}<\infty
\]
 when $h\in L^{q'}(S)$ and $fg\in H^{1,p}(M)$. If $n\ge3$, we can
apply the trace theorem, followed by the embedding theorem, 
\begin{equation}
\Tr:H^{1,p}(M)\longrightarrow B_{p,p}^{s}(S)\subset L^{q}(S),\label{A.7}
\end{equation}
with 
\begin{equation}
\begin{aligned}p\end{aligned}
=\frac{n}{n-1},\quad s=1-\frac{1}{p},\quad q=\frac{(n-1)p}{n-1-sp}=\frac{n-1}{n-2},\quad q'=n-1.\label{A.8}
\end{equation}
For $n=2$, \eqref{A.7} applies for all $p\in(1,2)$, again with
$s=1-1/p$, and taking $p\nearrow2$ yields $q\nearrow\infty$, hence
$q'\searrow1$. Thus \eqref{A.5}--\eqref{A.6} imply \eqref{A.4}. 
\end{proof}
\begin{rem}
The trace result is perhaps better known when $S$ is smooth. (Cf.
\cite{BerLof}, Theorem 6.6.1.) However, all the function spaces involved
are invariant under bi-Lipschitz maps. 
\end{rem}

Positive measures satisfying \eqref{A.2} can have much wilder support
than a Lipschitz surface. For example, one can take an infinite sequence
of measures $\mu_{k}$ satisfying the hypotheses of Proposition \ref{P:A.1},
supported on surfaces $S_{k}$, and set $\mu=\sum_{k=1}^{\infty}a_{k}\mu_{k}$,
with positive $a_{k}$ decreasing sufficiently fast.

Here is another class of examples. Let $\mathcal{O}\subset M$ be
an open set whose boundary $\p\mathcal{O}$ is locally the graph of
a continuous function. Then one can take a smooth vector field $X$
on $M$, vanishing nowhere on $\p\mathcal{O}$, whose flow $\mathcal{F}_{X}^{t}$
has the property that, for each $y\in\p\mathcal{O}$, $\mathcal{F}_{X}^{t}y$
belongs to $\mathcal{O}$ for small $t>0$. Then 
\begin{equation}
\mu=X\chi_{\mathcal{O}}\label{A.9}
\end{equation}
is a positive measure, supported by $\p\mathcal{O}$, and it belongs
to $H^{-1,\infty}(M)$. The positivity of $\mu$ is a consequence
of the fact that 
\begin{equation}
\chi_{\mathcal{O}}\circ\mathcal{F}_{X}^{t}\ge\chi_{\mathcal{O}},\label{A.10}
\end{equation}
for all small $t>0$, since 
\begin{equation}
t^{-1}(\chi_{\mathcal{O}}\circ\mathcal{F}_{X}^{t}-\chi_{\mathcal{O}})\longrightarrow X\chi_{\mathcal{O}}\ \text{ in }\ \mathcal{D}'(M),\ \text{ as }\ t\rightarrow0.\label{A.11}
\end{equation}

In the last class of examples, \eqref{A.9}, the support of $\mu$
has topological dimension $n-1$, but its Hausdorff dimension can
be $>n-1$. 

\subsection{Measures with fractal support}

We next produce measures satisfying \eqref{A.4} and supported on
``fractal'' sets of Hausdorff dimension $<n-1$. We make use of
the following result, contained in Theorem 4.7.4 of \cite{Zie89}.
Here, $B_{r}(x)$ denotes the ball of radius $r$ centered at $x$.
\begin{lem}
Let $\mu$ be a positive measure on $M$ with the property that there
exist $A<\infty$ and $\epsilon>0$ such that 
\begin{equation}
\mu(B_{r}(x))\le Ar^{n-q+\epsilon},\quad\forall\,r\in(0,1],\ x\in M.\label{A.12}
\end{equation}
Assume $q\in(1,n)$. Then 
\begin{equation}
\mu\in H^{-1,p}(M),\quad p=q'.\label{A.13}
\end{equation}
\end{lem}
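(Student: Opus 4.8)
The plan is to show that the measure $\mu$ satisfying the ball-growth bound \eqref{A.12} defines, via pairing against products $fg$, a bounded functional on $H^{1,q}(M)$, i.e. $\mu \in H^{1,q}(M)^* = H^{-1,q'}(M)$ with $q' = p$. The natural tool is a Morrey-type embedding: the hypothesis \eqref{A.12} says $\mu$ has ``dimension at least $n-q+\epsilon$'' in a uniform sense, while functions in $H^{1,q}(M)$, though not continuous, lie in a Morrey space whose exponent is tuned exactly to $q$; the gap $\epsilon$ provides room for a compactness gain. Concretely, I would proceed as follows.

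First I would recall the relevant characterization from Ziemer, Theorem 4.7.4: a positive measure $\nu$ on $M$ belongs to $H^{-1,p}(M)$ provided $\nu(B_r(x)) \le C r^{n-q+\epsilon}$ for all $r \in (0,1]$ and $x \in M$, where $p = q'$ and $q \in (1,n)$. This is precisely the statement to be invoked; the role of the present lemma is just to package it in the form needed for the damped-wave application, so the ``proof'' is essentially a citation together with a verification that the local (manifold) version follows from the Euclidean one. To pass from $\mathbb{R}^n$ to the compact manifold $M$, I would use a finite atlas of coordinate charts, a subordinate partition of unity $\{\chi_j\}$, and write $\mu = \sum_j \chi_j \mu$; each $\chi_j \mu$ is a compactly supported positive measure in a chart satisfying the Euclidean analogue of \eqref{A.12} (the ball condition is stable under bi-Lipschitz coordinate changes up to adjusting the constant $A$), and $H^{-1,p}$ is coordinate-invariant, so membership is inherited by the finite sum.

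The one point requiring genuine care is the interface between \eqref{A.13} and the mapping property \eqref{A.4}: the preceding discussion establishes \eqref{A.2} from \eqref{A.4}, which in dimension $n \ge 3$ reads $\mu \in H^{-1,n}(M)$, i.e. $q' = n$, hence $q = n/(n-1)$; and indeed $q = n/(n-1) \in (1,n)$ for $n \ge 2$, so the lemma applies with this value of $q$, and \eqref{A.12} then reads $\mu(B_r(x)) \le A r^{n-2+\epsilon}$ — a set of Hausdorff dimension exceeding $n-2$ but possibly much less than $n-1$, which is exactly the ``fractal'' regime advertised. So the lemma as stated is the general $q$-version, and the application specializes $q$ afterwards; I would keep the statement general and simply note the specialization. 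The main obstacle is not in the argument here, which is a bookkeeping reduction to a cited theorem, but rather in verifying that the capacity-theoretic machinery of \cite{Zie89} is stated for, or transfers to, the Riemannian setting; since all the spaces involved ($H^{1,q}$, $H^{-1,p}$, and the Riesz/Bessel capacities) are bi-Lipschitz invariant and the question is local, the partition-of-unity reduction is routine and I expect no real difficulty.
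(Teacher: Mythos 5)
Your proposal takes essentially the same route as the paper, which gives no argument for this lemma beyond the citation of Theorem 4.7.4 of \cite{Zie89}; your chart/partition-of-unity reduction and the bi-Lipschitz invariance of $H^{-1,p}$ and of the ball condition are exactly the routine localization that citation presupposes, and they are correct. The only slip is in your closing specialization (harmless for the lemma itself, which is stated for general $q\in(1,n)$): taking $q=n/(n-1)$ makes the exponent in \eqref{A.12} equal to $n-q+\epsilon=n-1-\tfrac{1}{n-1}+\epsilon$ — precisely the threshold appearing in \eqref{A.14} — not $n-2+\epsilon$.
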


We then see that \eqref{A.4} holds whenever 
\begin{equation}
\mu(B_{r}(x))\le Ar^{\alpha},\quad\alpha>n-1-\frac{1}{n-1}.\label{A.14}
\end{equation}
In particular, for $n=2$, it suffices to have \eqref{A.14} for some
$\alpha>0$.

We will give some explicit examples of a compactly supported measure
on $\mathbb{R}^{2}$ satisfying \eqref{A.14}. It will be clear that
many other examples can be constructed. We start with the Cantor middle
third set $\mathcal{K}\subset[0,1]$. Now put $[0,1]\subset\mathbb{R}\subset\mathbb{R}^{2}$,
say as part of the $x$-axis, so now $\mathcal{K}\subset\mathbb{R}^{2}$.
As is well known (cf.~\cite{Tay06}, p.~170), there is the $\alpha$-dimensional
Hausdorff measure computation 
\begin{equation}
\mathcal{H}^{\alpha}(\mathcal{K})=\gamma_{\alpha},\quad\text{for }\ \alpha=\frac{\log2}{\log3}\approx0.6309,\label{A.15}
\end{equation}
with $0<\gamma_{\alpha}<\infty$ (in fact, $\gamma_{\alpha}=\pi^{\alpha/2}2^{-\alpha}/\Gamma(\alpha/2+1)$).
Set 
\begin{equation}
\mu=\mathcal{H}^{\alpha}\lfloor\mathcal{K},\label{A.16}
\end{equation}
i.e., $\mu(S)=\mathcal{H}^{\alpha}(\mathcal{K}\cap S)$, for Borel
sets $S\subset\mathbb{R}^{2}$. The self similarity of $\mathcal{K}$
enables one to show that 
\begin{equation}
\mu(B_{3^{-k}}(x))\le C\,2^{-k},\label{A.17}
\end{equation}
which readily leads to \eqref{A.14}, with $\alpha$ as in \eqref{A.15}.

The Cantor middle third set described above is one of a family of
Cantor sets $\mathcal{K}(\vartheta)\subset[0,1]$, defined for $\vartheta\in(0,1)$
as follows. Remove from $[0,1]=I$ the open interval of length $\vartheta\ell(I)$,
with the same center as $I$, and repeat this process with the other
closed subintervals that remain. (Thus $\mathcal{K}=\mathcal{K}(1/3)$.)
This time (cf.~\cite{Tay06}, p.~171), one has 
\begin{equation}
\mathcal{H}^{\alpha}(\mathcal{K}(\vartheta))=\gamma_{\alpha},\quad\alpha=\frac{\log2}{\log b},\quad b=\frac{2}{1-\vartheta},\label{A.18}
\end{equation}
and again self-similarity yields 
\begin{equation}
\mu(B_{r}(x))\le Ar^{\alpha},\label{A.19}
\end{equation}
with $\alpha$ as in \eqref{A.18}, when 
\begin{equation}
\mu=\mathcal{H}^{\alpha}\lfloor\mathcal{K}(\vartheta).\label{A.20}
\end{equation}
Note that 
\begin{equation}
\vartheta\searrow0\Rightarrow\alpha\nearrow1,\quad\vartheta\nearrow1\Rightarrow\alpha\searrow0.\label{A.21}
\end{equation}
As before, we put $\mathcal{K}(\vartheta)\subset[0,1]\subset\RR\subset\RR^{2}$,
and regard $\mu$ in \eqref{A.20} as a compactly supported measure
on $\mathbb{R}^{2}$. Thus the push-forward of $\mu$ to a measure
on a compact two-dimensional manifold $M$, via a locally bi-Lipschitz
map, yields a measure on $M$ satisfying \eqref{A.4}, hence \eqref{A.2},
whenever $0<\vartheta<1$.

One way to get measures on higher dimensional spaces satisfying \eqref{A.14}
is to take products. Say $n=n_{1}+n_{2}$ and $\mu_{j}$ are compactly
supported measures on $\mathbb{R}^{n_{j}}$ satisfying 
\begin{equation}
\mu_{j}(B_{r}(x_{j}))\le C_{j}r^{\alpha_{j}},\quad j=1,2,\quad x_{j}\in\mathbb{R}^{n_{j}}.\label{A.22}
\end{equation}
If $x=(x_{1},x_{2})\in\mathbb{R}^{n}$, note that $B_{r}(x)\subset B_{r}(x_{1})\times B_{r}(x_{2})$,
so if 
\begin{equation}
\mu=\mu_{1}\times\mu_{2}\label{A.23}
\end{equation}
is the product measure on $\mathbb{R}^{n}$, we have 
\begin{equation}
\mu(B_{r}(x))\le\mu_{1}(B_{r}(x_{1}))\mu_{2}(B_{r}(x_{2}))\le C_{1}C_{2}r^{\alpha_{1}+\alpha_{2}},\quad x\in\mathbb{R}^{n}.\label{A.24}
\end{equation}
If $\alpha=\alpha_{1}+\alpha_{2}$ satisfies the condition on $\alpha$
in \eqref{A.14}, we get a compactly supported measure on $\mathbb{R}^{n}$
whose push-forward to a measure on an $n$-dimensional compact manifold
$M$, via a locally bi-Lipschitz map, satisfies \eqref{A.14}, hence
\eqref{A.4}, hence \eqref{A.2}.

For example, take $\vartheta_{j}\in(0,1)$, and set 
\begin{equation}
\mu_{j}=\mathcal{H}^{\alpha_{j}}\lfloor\mathcal{K}(\vartheta_{j}),\label{A.25}
\end{equation}
with $\alpha_{j}$ as in \eqref{A.18}, i.e., 
\begin{equation}
\alpha_{j}=\frac{\log2}{\log b_{j}},\quad b_{j}=\frac{2}{1-\vartheta_{j}}.\label{A.26}
\end{equation}
We regard $\mu_{1}$ as a measure on $\mathbb{R}^{2}$ and $\mu_{2}$
as a measure on $\mathbb{R}$, via $\mathcal{K}(\vartheta_{1})\subset[0,1]\subset\mathbb{R}\subset\mathbb{R}^{2}$
and $\mathcal{K}(\vartheta_{2})\subset[0,1]\subset\mathbb{R}$. Thus
$\mu=\mu_{1}\times\mu_{2}$ is a compactly supported measure on $\mathbb{R}^{3}$
(actually supported on a 2D linear subspace of $\mathbb{R}^{3}$).
In this case, the condition for \eqref{A.18} to hold is 
\begin{equation}
\alpha_{1}+\alpha_{2}>\frac{3}{2}.\label{A.27}
\end{equation}
Looking at \eqref{A.15}, we see that \eqref{A.27} fails when $\vartheta_{1}=\vartheta_{2}=1/3$.
In case $\vartheta_{1}=\vartheta_{2}=\vartheta$, the condition that
\eqref{A.27} hold is that 
\begin{equation}
2\,\frac{\log2}{\log2-\log(1-\vartheta)}>\frac{3}{2},\label{A.28}
\end{equation}
or equivalently, 
\begin{equation}
\vartheta<1-2^{-1/3}\approx0.2063.\label{A.29}
\end{equation}

\begin{rem}
When the measures $\mu_{j}$ are given by \eqref{A.25}, $\mu=\mu_{1}\times\mu_{2}$
is supported on $\mathcal{K}(\vartheta_{1})\times\mathcal{K}(\vartheta_{2})$,
but it is generally \textit{not} $(\alpha_{1}+\alpha_{2})$-dimensional
Hausdorff measure on this set. See pp.~70--74 of \cite{Fal85_fract}
for a discussion of this matter. 
\end{rem}

Returning to generalities, we mention that while \eqref{A.12} is
a sufficient condition for \eqref{A.13}, it is not quite necessary.
There is a (somewhat more elaborate) necessary and sufficient condition
for \eqref{A.13} to hold, provided $1<q<n$, given in terms of an
estimate on $\mu(B_{r}(x))$. See Theorem 4.7.5 of \cite{Zie89}.
The condition is subtly weaker than \eqref{A.12}.

\section{Proof of theorem \ref{T:m-damp-1}}

As usual, when dealing with the wave equation, it behooves us to write
the second order (in time) scalar equation as a first order system.
Recall the vector space $\HH=L^{2}(\Omega)\oplus H_{0}^{1}(\Omega)$.
The inner product on $\HH$ is defined as follows: if $U=(v,w)$ and
$F=(f,g)$ are in $\HH$, then 
\begin{align*}
\lll U,F\rrr_{\HH} & =\lll\begin{pmatrix}v\\
w
\end{pmatrix},\begin{pmatrix}f\\
g
\end{pmatrix}\rrr_{\HH}\\
 & =\lll v,f\rrr_{L^{2}(\Omega)}+\lll\nabla w,\nabla g\rrr_{L^{2}(\Omega)}.
\end{align*}
Again, since we are using Dirichlet boundary conditions, the homogeneous
$H_{0}^{1}$ inner product induces a norm, so the inner product on
$\HH$ induces a norm on $\HH$.

Now suppose $u\in C((0,\infty)_{t},H_{0}^{1}(\Omega))$ solves the
damped wave equation \eqref{E:sing-damp}, and set 
\[
U=\begin{pmatrix}u_{t}\\
u
\end{pmatrix}.
\]
Differentiating, we have 
\begin{align*}
U_{t} & =\begin{pmatrix}u_{tt}\\
u_{t}
\end{pmatrix}\\
 & =\begin{pmatrix}-Mu_{t}+\Delta_{g}u\\
u_{t}
\end{pmatrix}\\
 & =\begin{pmatrix}-M & \Delta_{g}\\
I & 0
\end{pmatrix}\begin{pmatrix}u_{t}\\
u
\end{pmatrix}=GU,
\end{align*}
where $G$ is the matrix operator 
\begin{equation}
G=\begin{pmatrix}-M & \Delta_{g}\\
I & 0
\end{pmatrix}.\label{E:G-def}
\end{equation}

Let us compute the weak formulation of the equation $U_{t}=GU$. If
$\Psi=(\phi,\psi)\in\HH\cap D(G)$, where $D(G)$ is the domain of
$G$ (to be computed later), then 
\begin{align*}
\lll U_{t},\Psi\rrr_{\HH} & =\lll\begin{pmatrix}u_{tt}\\
u_{t}
\end{pmatrix},\begin{pmatrix}\phi\\
\psi
\end{pmatrix}\rrr_{\HH}\\
 & =\lll\begin{pmatrix}-Mu_{t}+\Delta_{g}u\\
u_{t}
\end{pmatrix},\begin{pmatrix}\phi\\
\psi
\end{pmatrix}\rrr_{\HH}\\
 & =-\lll Mu_{t},{\phi}\rrr-\lll\nabla u,\nabla\phi\rrr_{L^{2}(\Omega)}+\lll\nabla u_{t},\nabla\psi\rrr_{L^{2}(\Omega)}.
\end{align*}

The left hand side is similarly computed 
\begin{align*}
\lll\begin{pmatrix}u_{tt}\\
u_{t}
\end{pmatrix},\begin{pmatrix}\phi\\
\psi
\end{pmatrix}\rrr & =\lll u_{tt},\phi\rrr_{L^{2}(\Omega)}+\lll\nabla u_{t},\nabla\psi\rrr_{L^{2}(\Omega)}.
\end{align*}
Making the obvious cancellations, we have 
\[
\lll u_{tt},\phi\rrr_{L^{2}(\Omega)}=-\lll Mu_{t},\phi\rrr-\lll\nabla u,\nabla\phi\rrr_{L^{2}(\Omega)},
\]
which is the same weak formulation as the scalar equation. Putting
the other terms back in, we have the weak formulation in terms of
the system: 
\[
\lll U_{t},\Psi\rrr_{\HH}=\lll\begin{pmatrix}-\nabla u\\
u_{t}
\end{pmatrix},\begin{pmatrix}\nabla\phi\\
\psi
\end{pmatrix}\rrr_{\HH}-\lll Mu_{t},\phi\rrr.
\]

We begin by computing the adjoint of the matrix operator $G$. Let
\[
U=\begin{pmatrix}v\\
w
\end{pmatrix},F=\begin{pmatrix}f\\
g
\end{pmatrix}\in\HH\cap D(G),
\]
where the domain $D(G)$ will be determined shortly, and compute 
\begin{align*}
\lll GU,F\rrr_{\HH} & =\lll\begin{pmatrix}-M & \Delta_{g}\\
I & 0
\end{pmatrix}\begin{pmatrix}v\\
w
\end{pmatrix},\begin{pmatrix}f\\
g
\end{pmatrix}\rrr_{\HH}\\
 & =\lll\begin{pmatrix}-Mv+\Delta w\\
v
\end{pmatrix},\begin{pmatrix}f\\
g
\end{pmatrix}\rrr_{\HH}\\
 & =-\lll Mv,f\rrr+\lll\Delta w,f\rrr_{L^{2}(\Omega)}+\lll\nabla v,\nabla g\rrr_{L^{2}(\Omega)}\\
 & =\lll\begin{pmatrix}v\\
w
\end{pmatrix},\begin{pmatrix}-Mf-\Delta_{g}g\\
-f
\end{pmatrix}\rrr_{\HH}\\
 & =\lll\begin{pmatrix}v\\
w
\end{pmatrix},\begin{pmatrix}-M & -\Delta_{g}\\
-I & 0
\end{pmatrix}\begin{pmatrix}f\\
g
\end{pmatrix}\rrr_{\HH},
\end{align*}
so that the adjoint is 
\[
G^{*}=\begin{pmatrix}-M & -\Delta_{g}\\
-I & 0
\end{pmatrix}.
\]
Since $G\neq G^{*}$, we will consider only the real part to show
that $G$ is dissipative. That is, we need to show $\Re\lll GU,U\rrr\leq0$.
Let 
\[
U=\begin{pmatrix}v\\
w
\end{pmatrix}\in\HH\cap D(G).
\]
We compute 
\begin{align*}
\Re\lll GU,U\rrr_{\HH} & =\Re\lll\begin{pmatrix}-M & \Delta_{g}\\
I & 0
\end{pmatrix}\begin{pmatrix}v\\
w
\end{pmatrix},\begin{pmatrix}v\\
w
\end{pmatrix}\rrr_{\HH}\\
 & =\Re\lll\begin{pmatrix}-Mv+\Delta w\\
v
\end{pmatrix},\begin{pmatrix}v\\
w
\end{pmatrix}\rrr_{\HH}\\
 & =-\lll Mv,v\rrr+\Re\lll\Delta_{g}w,v\rrr_{L^{2}(\Omega)}+\Re\lll\nabla v,\nabla w\rrr_{L^{2}(\Omega)}\\
 & =-\lll Mv,v\rrr-\Re\lll\nabla w,\nabla v\rrr_{L^{2}(\Omega)}+\Re\lll\nabla v,\nabla w\rrr_{L^{2}(\Omega)}\\
 & =-\lll Mv,v\rrr\\
 & \leq0,
\end{align*}
since $M$ is symmetric and non-negative according to Assumption 1.
This shows that $G$ is dissipative.

In order to conclude that $G$ is maximal dissipative, we also need
to show that $(I-G)$ is an isomorphism between appropriate spaces.
In particular, we will describe the domain $D(I-G)$ and show that
\[
(I-G):D(I-G)\to\HH
\]
is an isomorphism. We will also conclude that the inverse mapping
$(I-G)^{-1}:\HH\to\HH$ is compact. This is where we use the assumed
compact mapping properties of $M:H_{0}^{1}(\Omega)\to H^{-1}(\Omega)$.

Let 
\[
U=\begin{pmatrix}v\\
w
\end{pmatrix}\in\HH\cap D(G),
\]
and 
\[
F=\begin{pmatrix}f\\
g
\end{pmatrix}\in\HH.
\]
We want to understand the solvability of $(I-G)U=F$ and show that
for any $F\in\HH$, there is a unique $U\in D(I-G)$ solving this
equation. Writing out in components: 
\begin{align*}
(I-G)\begin{pmatrix}v\\
w
\end{pmatrix} & =\begin{pmatrix}(1+M)v-\Delta_{g}w\\
-v+w
\end{pmatrix}\\
 & =\begin{pmatrix}f\\
g
\end{pmatrix},
\end{align*}
or 
\[
\begin{cases}
(1+M)v-\Delta_{g}w=f,\\
-v+w=g.
\end{cases}
\]
Using the second equation to eliminate $v=w-g$ from the first, we
get the scalar equation 
\begin{equation}
(1+M)w-\Delta_{g}w=f+(1+M)g.\label{E:solve-for-w}
\end{equation}
Now $F\in\HH$ means that $f\in L^{2}(\Omega)$ and $g\in H_{0}^{1}(\Omega)$.
The assumed mapping properties of $M$ implies 
\[
Mg\in H^{-1}(\Omega).
\]
Hence the right hand side of \eqref{E:solve-for-w} is in $H^{-1}(\Omega)$
or better. Since $U\in D(I-G)\subset\HH$, we know that $w\in H_{0}^{1}(\Omega)$
or better.

Now observe that $(1-\Delta_{g}):H_{0}^{1}(\Omega)\to H^{-1}(\Omega)$
is an isomorphism by the usual Fredholm theory. Since $M:H_{0}^{1}(\Omega)\to H^{-1}(\Omega)$
is compact, 
\[
(1-\Delta_{g}+M):H_{0}^{1}(\Omega)\to H^{-1}(\Omega)
\]
is a compact perturbation of an isomorphism, hence has Fredholm index
$0$. We therefore know that the codimension of the image is the dimension
of the kernel. We therefore need only establish injectivity to recover
surjectivity. Assume then that $w\in\ker(1-\Delta_{g}+M)$ and compute
\begin{align*}
0 & =\lll(1-\Delta_{g}+M)w,w\rrr_{L^{2}(\Omega)}\\
 & =\|w\|_{L^{2}(\Omega)}^{2}+\|\nabla w\|_{L^{2}(\Omega)}^{2}+\lll Mw,w\rrr\\
 & \geq\|w\|_{L^{2}(\Omega)}^{2}+\|\nabla w\|_{L^{2}(\Omega)}^{2}
\end{align*}
since $M$ is symmetric and non-negative. Hence $w=0$ and the kernel
is trivial as claimed.

We have shown that 
\[
(1-\Delta_{g}+M):H_{0}^{1}(\Omega)\to H^{-1}(\Omega)
\]
is an isomorphism. We will use this, together with the mapping properties
of $M$ to establish that $(I-G)^{-1}$ is compact and also describe
$D(I-G)$. We give the right hand side of \eqref{E:solve-for-w} a
name. Let $\Phi:\HH\to H^{-1}$ be defined by 
\[
\Phi\begin{pmatrix}f\\
g
\end{pmatrix}=f+g+Mg.
\]
Recall that $f\in L^{2}(\Omega)$ and $g\in H_{0}^{1}(\Omega)$. The
embeddings 
\[
L^{2}(\Omega)\hookrightarrow H^{-1}(\Omega),\quad H_{0}^{1}(\Omega)\hookrightarrow H^{-1}(\Omega)
\]
are compact, as is $M:H_{0}^{1}(\Omega)\to H^{-1}(\Omega)$, hence
$\Phi:\HH\to H^{-1}$ is compact. Returning to \eqref{E:solve-for-w},
we know the operator on the left hand side is an isomorphism, hence
invertible as a bounded linear operator. We have 
\[
w=(1-\Delta_{g}+M)^{-1}\Phi\begin{pmatrix}f\\
g
\end{pmatrix}=:\Psi\begin{pmatrix}f\\
g
\end{pmatrix}.
\]
The mapping $\Psi$ defined above is a composition of a bounded linear
operator with a compact operator, taking values in $H_{0}^{1}(\Omega)$.
Hence $\Psi:\HH\to H_{0}^{1}(\Omega)$ is compact. Recalling that
$v=w-g$, we have 
\[
v=\Psi\begin{pmatrix}f\\
g
\end{pmatrix}-g.
\]

The original equation we considered was $(I-G)U=F$. In particular,
the second component in $F$, $g\in H_{0}^{1}(\Omega)$. Hence 
\[
v=\Psi\begin{pmatrix}f\\
g
\end{pmatrix}-g\in H_{0}^{1}(\Omega)
\]
if $F\in\HH$. As the embedding $H_{0}^{1}(\Omega)\hookrightarrow L^{2}(\Omega)$
is compact, the mapping defining $v$ is compact when mapping into
$L^{2}(\Omega)$. Taken together, we write for $U$ so obtained 
\[
U=(I-G)^{-1}F,
\]
where 
\[
(I-G)^{-1}\begin{pmatrix}f\\
g
\end{pmatrix}=\begin{pmatrix}\Psi\begin{pmatrix}f\\
g
\end{pmatrix}-g\\
\Psi\begin{pmatrix}f\\
g
\end{pmatrix}
\end{pmatrix}.
\]
Then as a mapping 
\[
(I-G)^{-1}:\HH\to\HH,
\]
$(I-G)^{-1}$ is compact. The domain $D(I-G)$ is the image of $\HH$
under this transformation.

\section{Decay}

In this section, we examine the general spectral theory of $G$
and the associated semigroup $e^{tG}$ which is the solution operator
for the damped wave system. We have shown that $(I-G)^{-1}:\HH\to\HH$
is compact. It follows that $G$ is closed and $1\notin\Spec G$.
As a classical result (see \cite{Kat95}, Theorem III.6.29) we deduce
that $G$ has compact resolvent with eigenvalues of finite multiplicities,
accumulating only possibly at $\infty\in\overline{\cx}$. As a direct
consequence of $\Re\langle GU,U\rangle_{\HH}\leq0$ for any $U\in\HH$,
the spectrum of $G$ satisfies 
\[
\Spec G\subset\{z\in\cx,\ \Re z\leq0\},
\]
so the associated semigroup 
\[
e^{tG}:D(I-G)\to\HH
\]
is a contraction, in the sense that 
\[
\|e^{tG}\|_{\HH\to\HH}\leq1.
\]

We also deduce that in particular, the image of $\HH$ is $D(I-G)$,
which is a subspace of $H_{0}^{1}\oplus H_{0}^{1}\subset\HH.$ When
operating on eigenfunctions of $G$, the evolution is easy to compute.
If 
\[
F\in D(I-G)
\]
is an eigenfunction of $G$ with eigenvalue $\zeta\in\cx$, we have
$GF=\zeta F$ and then 
\[
e^{tG}F=e^{t\zeta}F.
\]
Hence the decay properties of the semigroup are dictated by the eigenvalues
of $G$. In particular, since $\zeta$ must lie in the closed left
half-plane in $\cx$, any purely imaginary eigenvalues correspond
to non-decaying states. In the absence of any purely imaginary eigenvalues,
the asymptotic distribution of $\Re\zeta$ as $|\Im\zeta|\to\infty$
determines the decay rate (see, for example, \cite{Chr-wave-2}).

Let us fix some notations. If $\zeta\in\Spec(G)$ is an eigenvalue,
set $V_{\zeta}\subset D(G)$ to be the generalized eigenspace corresponding
to $\zeta$, and let $V_{\zeta}^{\#}\subset V_{\zeta}$ be the primitive
eigenspace. That is, $V_{\zeta}^{\#}$ consists of eigenvectors of
$G$ with eigenvalue $\zeta$, and $V_{\zeta}$ consists of eigenvectors
of $G^{k}$, $k\geq1$, with eigenvalue $\zeta$. As we noted above,
each $V_{\zeta}$ and $V_{\zeta}^{\#}$ is finite dimensional.

Let $\zeta\in\Spec G$ so $\Re\zeta\leq0$. Take 
\[
\left(\begin{array}{c}
f\\
g
\end{array}\right)\in V_{\zeta}^{\#}.
\]
Recall that this implies 
\[
g\in H_{0}^{1},\qquad f=\zeta g,\qquad\Delta g=\zeta^{2}g+\zeta Mg.
\]

\begin{prop}
If $(f,g)\in V_{\zeta}^{\#}$ with $\|(f,g)\|_{\mathcal{H}}=1$ then
$\Re\zeta=-\langle Mf,f\rangle.$ 
\end{prop}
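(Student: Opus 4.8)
The plan is to read off the identity directly from the eigenvalue equation combined with the dissipativity computation already performed for $G$. Write $F=(f,g)^{T}\in V_{\zeta}^{\#}$, so that $GF=\zeta F$. Since $F\in D(G)\subset H_{0}^{1}(\Omega)\oplus H_{0}^{1}(\Omega)$, in particular $f\in H_{0}^{1}(\Omega)$ (equivalently, $f=\zeta g$ with $g\in H_{0}^{1}(\Omega)$), hence $Mf\in H^{-1}(\Omega)$ and the duality bracket $\langle Mf,f\rangle$ is a well-defined real number, non-negative because $M$ is symmetric and non-negative.

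First I would pair the eigenvalue equation against $F$ in the $\mathcal{H}$-inner product: using sesquilinearity and the normalization $\|(f,g)\|_{\mathcal{H}}=1$,
\[
\langle GF,F\rangle_{\mathcal{H}}=\zeta\,\langle F,F\rangle_{\mathcal{H}}=\zeta\,\|F\|_{\mathcal{H}}^{2}=\zeta .
\]
Next I would take real parts and invoke the identity established in the proof of Theorem \ref{T:m-damp-1}: for any $U=(v,w)^{T}\in\mathcal{H}\cap D(G)$ one has $\Re\langle GU,U\rangle_{\mathcal{H}}=-\langle Mv,v\rangle$. Applying this with $U=F$, i.e.\ $v=f$ and $w=g$, gives $\Re\langle GF,F\rangle_{\mathcal{H}}=-\langle Mf,f\rangle$. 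Comparing the two computations yields $\Re\zeta=-\langle Mf,f\rangle$, which is exactly the claim.

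There is essentially no obstacle here: the only points requiring care are that $F$ genuinely lies in $D(G)$ so that $\langle GF,F\rangle_{\mathcal{H}}$ is legitimate, and that $f=\zeta g\in H_{0}^{1}(\Omega)$ so that $Mf\in H^{-1}(\Omega)$ and $\langle Mf,f\rangle$ is finite — both immediate from the description of $D(G)$ obtained in Section 3. As a consistency check, since $\langle Mf,f\rangle\geq0$ this re-derives $\Re\zeta\leq0$, and shows moreover that a purely imaginary eigenvalue forces $\langle Mf,f\rangle=0$; this is presumably the entry point for the analysis underlying Theorem \ref{thm: stabilization}.
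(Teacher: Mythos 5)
Your proof is correct and follows essentially the same route as the paper: both pair the eigenvalue equation $GF=\zeta F$ with $F$ in the $\mathcal{H}$-inner product and take real parts, the identity $\Re\langle GF,F\rangle_{\mathcal{H}}=-\langle Mf,f\rangle$ being exactly what the paper re-derives by hand from the relations $f=\zeta g$, $\Delta g=\zeta f+Mf$. You merely streamline this by citing the dissipativity computation already carried out in the proof of Theorem \ref{T:m-damp-1}, which is legitimate since the eigenvector lies in $D(G)$.
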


In particular, if $M$ is the multiplication by the measure $\mu$,
then 
\[
\Re\zeta=-\int_{\Omega}|f|^{2}d\mu.
\]

\begin{proof}
Note that 
\[
G\left(\begin{array}{c}
f\\
g
\end{array}\right)=\left(\begin{array}{c}
\Delta g-Mf\\
f
\end{array}\right)=\zeta\left(\begin{array}{c}
f\\
g
\end{array}\right),
\]
so 
\[
\left\langle G\left(\begin{array}{c}
f\\
g
\end{array}\right),\left(\begin{array}{c}
f\\
g
\end{array}\right)\right\rangle _{\HH}=\langle\Delta g,f\rangle_{L^{2}(\Omega)}-\langle Mf,f\rangle-\langle\Delta f,g\rangle_{L^{2}(\Omega)}.
\]
Then the above implies 
\begin{align*}
\zeta\left\Vert \left(\begin{array}{c}
f\\
g
\end{array}\right)\right\Vert _{\mathcal{H}}^{2} & =\langle\Delta g,f\rangle_{L^{2}(\Omega)}-\langle Mf,f\rangle-\overline{\langle\zeta f+Mf,f\rangle}_{L^{2}(\Omega)}\\
 & =\zeta\|f\|_{L^{2}}^{2}+\langle Mf,f\rangle-\langle Mf,f\rangle-\overline{\zeta}\|f\|_{L^{2}}^{2}-\langle Mf,f\rangle\\
 & =(\zeta-\overline{\zeta})\|f\|_{L^{2}}^{2}-\langle Mf,f\rangle.
\end{align*}
Taking the real part gives the result. 
\end{proof}

\subsection{Primitive eigenspaces with purely imaginary eigenvalues}

Let us denote the primitive eigenspaces with purely imaginary eigenvalues
by $V_{i\lambda}^{\#}$, where this notation assumes $\lambda\in\mathbb{R}$.
An immediate consequence of the preceding proposition is 
\begin{cor}
\label{cor: imag M}We have 
\[
\begin{pmatrix}v\\
w
\end{pmatrix}\in V_{i\lambda}^{\#}\quad\Leftrightarrow\quad\langle Mv,v\rangle=0.
\]
\end{cor}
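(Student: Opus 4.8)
The plan is to read off both implications directly from the preceding Proposition; the only points needing attention are the normalization of $(v,w)$ and the standing hypothesis, inherited from that Proposition, that $(v,w)$ is a nonzero primitive eigenvector of $G$. The equivalence is really being asserted over the set of such eigenvectors, the symbol $\lambda$ on the left being existentially quantified and determined a posteriori as the imaginary part of the eigenvalue.

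For the forward implication I would take $(v,w)\in V_{i\lambda}^{\#}$ with $(v,w)\neq 0$, set $c=\|(v,w)\|_{\HH}>0$, and apply the Proposition to the unit vector $(c^{-1}v,c^{-1}w)\in V_{i\lambda}^{\#}$: it gives $\Re(i\lambda)=-\langle M(c^{-1}v),c^{-1}v\rangle=-c^{-2}\langle Mv,v\rangle$, using linearity and symmetry of $M$. Since $\Re(i\lambda)=0$ and $c>0$, this forces $\langle Mv,v\rangle=0$. For the reverse implication I would take a nonzero primitive eigenvector $(v,w)$ with $G(v,w)=\zeta(v,w)$ and $\langle Mv,v\rangle=0$; normalizing in the same way and applying the Proposition gives $\Re\zeta=-c^{-2}\langle Mv,v\rangle=0$, hence $\zeta=i\lambda$ with $\lambda=\Im\zeta\in\reals$, that is, $(v,w)\in V_{i\lambda}^{\#}$.

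I do not expect any genuine obstacle here, as this is an immediate corollary; the one thing worth underlining is that $\langle Mv,v\rangle=0$ by itself says nothing about $(v,w)$ — it detects a purely imaginary eigenvalue only once one already knows $(v,w)$ is a primitive eigenvector. I would also record that $\langle Mv,v\rangle\geq 0$ always, by nonnegativity of $M$ (in the measure case $\langle Mv,v\rangle=\int_{\Omega}|v|^{2}\,d\mu$), so that $\langle Mv,v\rangle=0$ is exactly the failure of the positivity condition $\int_{\Omega}|\psi_{\lambda}|^{2}\,d\mu>0$ of Theorem \ref{thm: stabilization}; this makes the corollary the natural entry point to the proof of that theorem.
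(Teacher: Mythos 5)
Your proof is correct and follows exactly the route the paper intends: the corollary is stated there as an immediate consequence of the preceding proposition, and your argument simply applies that proposition after normalizing, noting $\Re(i\lambda)=0$ in one direction and $\Re\zeta=-c^{-2}\langle Mv,v\rangle=0$ in the other. Your remark that the equivalence is to be read over (nonzero) primitive eigenvectors, with $\lambda$ determined a posteriori, is the right reading of the statement and matches the paper's usage.
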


In particular, if $M=M_{\mu}$ is the multiplication by a non-negative
measure satisfying the compactness assumptions, then a necessary condition
for $(v,w)^{T}\in V_{i\lambda}^{\#}$ is that 
\[
\lll M_{\mu}v,v\rrr=\int_{\Omega}|v|^{2}d\mu=0.
\]
This is true if and only if $v$ is supported in the $0$-set of the
measure $\mu$. A similar converse holds as well. If $v$ satisfies
$\int_{\Omega}|v|^{2}d\mu=0$, then, we recall that $M_{\mu}v\in H^{-1}(\Omega)$,
so for any $f\in H_{0}^{1}(\Omega)$, we have 
\begin{align*}
|\lll M_{\mu}v,f\rrr| & =\left|\int_{\Omega}v\bar{f}d\mu\right|\\
 & \leq\left(\int_{\Omega}|v|^{2}d\mu\right)^{1/2}\left(\int_{\Omega}|f|^{2}d\mu\right)^{1/2}.
\end{align*}
The latter integral is bounded, so the use of Hölder's inequality
is justified; the first integral is $0$ by assumption. Hence we have
that $M_{\mu}v=0$ in $H^{-1}(\Omega)$ (that is, in the weak sense). 

Returning to the matrix 
\[
G:D(G)\to H^{-1}(\Omega)\oplus H_{0}^{1}(\Omega),
\]
we have if $(v,w)^{T}\in V_{i\lambda}^{\#}$, then 
\begin{align*}
G\begin{pmatrix}v\\
w
\end{pmatrix} & =\begin{pmatrix}-M_{\mu}v+\Delta w\\
v
\end{pmatrix}\\
 & =\begin{pmatrix}\Delta w\\
v
\end{pmatrix}\\
 & =\begin{pmatrix}0 & \Delta\\
I & 0
\end{pmatrix}\begin{pmatrix}v\\
w
\end{pmatrix}.
\end{align*}
But 
\[
G\begin{pmatrix}v\\
w
\end{pmatrix}=i\lambda\begin{pmatrix}v\\
w
\end{pmatrix},
\]
so 
\[
\begin{cases}
\Delta w=i\lambda v\\
v=i\lambda w,
\end{cases}
\]
Substituting, we have $-\Delta w=\lambda^{2}w$, which means that
$w$ is a Dirichlet eigenfunction on $\Omega$. Hence if $w$ is such
an eigenfunction and $v=i\lambda w$, then the solution $u$ to the
damped wave equation 
\[
\begin{cases}
(\p_{t}^{2}-\Delta+M_{\mu}\p_{t})u=0,\text{ on }(0,\infty)_{t}\times\Omega,\\
u|_{\p\Omega}=0,\text{ on }[0,\infty)_{t}\times\p\Omega,\\
u(0,x)=w(x),\,\,u_{t}(0,x)=v(x),
\end{cases}
\]
satisfies the usual undamped wave equation: 
\[
\begin{cases}
(\p_{t}^{2}-\Delta)u=0,\text{ on }(0,\infty)_{t}\times\Omega,\\
u|_{\p\Omega}=0,\text{ on }[0,\infty)_{t}\times\p\Omega,\\
u(0,x)=w(x),\,\,u_{t}(0,x)=v(x).
\end{cases}
\]


Next, let 
\[
\MM=\overline{\bigoplus_{\lambda\in\reals}V_{i\lambda}^{\#}}
\]
denote the $\HH$ closure of all of the eigenspaces with purely imaginary
eigenvalue. Since the spectrum is pure point, this is, of course,
a sum over at most countably many such $\lambda\in\reals$. Let $\MM^{\perp}\subset\HH$
be the orthogonal complement to $\MM$. The following proposition
is the central result of this paper, it will readily imply Theorem
\ref{thm: stabilization}. 
\begin{prop}
\label{prop: decay}Let $G$, $\MM$, and $\MM^{\perp}$ be defined
as above.
\begin{enumerate}
\item For each $t$, the semigroup $e^{tG}:\MM\to\MM$ is unitary. 
\item The space $\MM^{\perp}$ is also invariant under $e^{tG}$. 
\item The resolvent $(I-G)^{-1}:\MM^{\perp}\to\MM^{\perp}$ is compact. 
\item If $F\in\MM^{\perp}$, then 
\[
\|e^{tG}F\|_{\HH}\to0,
\]
as $t\to\infty$. 
\end{enumerate}
\end{prop}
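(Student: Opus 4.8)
The four items are tied together: (1)–(2) reduce the dynamics to an orthogonal decomposition $\HH = \MM \oplus \MM^\perp$ on which $e^{tG}$ acts separately, (3) provides the compactness needed to run a spectral argument on $\MM^\perp$, and (4) is the payoff. The hard part will be item (2), because invariance of $\MM^\perp$ is not automatic from invariance of $\MM$ unless one knows $e^{tG}$ restricted to $\MM$ is normal (indeed unitary), so I would prove (1) first and use it to get (2).

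\emph{Step 1 (item 1).} Take $F = (v,w)^T \in V_{i\lambda}^\#$ with $\|F\|_\HH = 1$. By Corollary \ref{cor: imag M}, $\langle Mv,v\rangle = 0$, and as explained after that corollary this forces $Mv = 0$ in $H^{-1}(\Omega)$; moreover $v = i\lambda w$ with $-\Delta w = \lambda^2 w$, so on $\MM$ the operator $G$ agrees with the \emph{undamped} generator $G_0 = \begin{pmatrix} 0 & \Delta \\ I & 0\end{pmatrix}$, which is skew-adjoint on $\HH$ (this is the standard computation: $\Re\langle G_0 U, U\rangle_\HH = 0$ for all $U$, and one checks $G_0^* = -G_0$ on the appropriate domain). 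Since $\MM$ is the closed span of eigenvectors of this skew-adjoint operator, $\MM$ reduces $G_0$, and $e^{tG}|_\MM = e^{tG_0}|_\MM$ is a strongly continuous group of unitaries. One small point to check: that $G$ and $G_0$ really have the same action on each $V_{i\lambda}^\#$ and that the generalized eigenspace is primitive here, i.e. there are no Jordan blocks at $i\lambda$ — this follows from Proposition \ref{prop: decay}'s predecessor (the computation $\Re\zeta = -\langle Mf,f\rangle$) together with skew-adjointness of $G_0$, since a genuine Jordan chain would contradict the isometry.

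\emph{Step 2 (item 2).} Because $e^{tG}|_\MM$ is unitary and $\MM$ is closed, the adjoint semigroup $e^{tG^*}$ also preserves $\MM$ (on $\MM$ it is $e^{-tG_0}$), so for $F \in \MM^\perp$ and $H \in \MM$ we get $\langle e^{tG}F, H\rangle_\HH = \langle F, e^{tG^*}H\rangle_\HH = 0$; hence $e^{tG}F \in \MM^\perp$. Alternatively, and perhaps cleaner: $\MM$ is invariant under both $G$ and $G^*$ (a reducing subspace), so $\MM^\perp$ is too, and invariance of the semigroup follows from invariance of the resolvent $(I-G)^{-1}$, which one sees directly from the reducing property.

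\emph{Step 3 (item 3).} $(I-G)^{-1}:\HH\to\HH$ is compact by Theorem \ref{T:m-damp-1}, and by Step 2 it maps $\MM^\perp$ into $\MM^\perp$; the restriction of a compact operator to a closed invariant subspace is compact, giving (3). Consequently $G|_{\MM^\perp}$ has discrete spectrum consisting of eigenvalues of finite multiplicity accumulating only at $\infty$, and by construction of $\MM$ \emph{none} of these eigenvalues is purely imaginary: any purely imaginary eigenvalue of $G$ would produce an eigenvector already lying in $\MM$, hence orthogonal to $\MM^\perp$.

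\emph{Step 4 (item 4).} Now run the standard strong-stability argument on the piece $e^{tG}|_{\MM^\perp}$. We have a contraction semigroup whose generator has compact resolvent and no eigenvalues on $i\reals$; by the Arendt–Batty–Lyubich–Vũ theorem (or, since the resolvent is compact and the spectrum is therefore discrete, even the simpler version: $\sigma(G)\cap i\reals$ is empty rather than merely countable with no point spectrum), every orbit tends to $0$ strongly, i.e. $\|e^{tG}F\|_\HH \to 0$ for all $F \in \MM^\perp$. The only thing to verify to invoke Arendt–Batty is that $\sigma(G|_{\MM^\perp}) \cap i\reals = \varnothing$, which Step 3 gives; compactness of the resolvent makes the spectral hypothesis as strong as one could want. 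This completes item (4) and hence the proposition.

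The main obstacle, as flagged, is item (2): making sure $\MM$ is a genuine \emph{reducing} subspace (invariant under $G$ \emph{and} $G^*$) rather than merely $G$-invariant — this is exactly what the unitarity in item (1) buys us, which is why the order (1) before (2) before (3)–(4) matters.
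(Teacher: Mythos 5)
Your proof is correct, and items (1), (3), (4) run essentially as in the paper (for (4) the paper invokes an abstract result quoted from \cite{BRT-D-damp} --- maximal dissipative, compact resolvent, no imaginary eigenvalues implies strong decay --- while you cite Arendt--Batty--Lyubich--V\~u; these are the same ingredient). Where you genuinely diverge is item (2), which is indeed the delicate point. The paper reduces (2) to showing that no \emph{generalized} eigenvector with purely imaginary eigenvalue lies in $\MM^{\perp}$, arguing by finite-dimensionality of $V_{i\lambda}$ and the Jordan form to extract an honest eigenvector and reach a contradiction; note that this only yields $V_{i\lambda}\cap\MM^{\perp}=\{0\}$, and the passage from that statement to invariance of $\MM^{\perp}$ under $e^{tG}$ is left implicit (for a non-normal generator, $G$-invariance of $\MM$ alone does not give invariance of the orthogonal complement). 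You instead prove invariance directly by showing $\MM$ is a \emph{reducing} subspace: since $Mv=0$ for $(v,w)\in V_{i\lambda}^{\#}$, each such vector is also an eigenvector of $G^{*}$ with eigenvalue $-i\lambda$, so the adjoint semigroup $(e^{tG})^{*}$ preserves the span of the $V_{i\lambda}^{\#}$ and hence $\MM$, and duality gives $e^{tG}\MM^{\perp}\subset\MM^{\perp}$; this is cleaner, makes no appeal to completeness of eigenvectors, and is the honest justification of the step the paper glosses. Two small remarks: your one-line implication ``$e^{tG}|_{\MM}$ unitary $\Rightarrow$ $(e^{tG})^{*}$ preserves $\MM$'' needs the contraction property of $e^{tG}$ on all of $\HH$ (or simply the explicit eigenvector computation $G^{*}F=-i\lambda F$ you indicate in parentheses), which is available from Theorem \ref{T:m-damp-1}; and the absence of Jordan blocks at imaginary eigenvalues, which you flag as needing a check, follows immediately from contractivity (a chain would force linear growth of $\|e^{tG}F\|$) but is not actually needed anywhere in your argument.
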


\begin{proof}
Part 1 follows directly from the definition of $\MM$. To prove part
2, it suffices to show that no generalized eigenvector of $G$ with
imaginary eigenvalue is in $\MM^{\perp}$. Suppose for some $\lambda\in\reals$
that 
\[
\MM^{\perp}\cap V_{i\lambda}\neq\emptyset.
\]
Since $V_{i\lambda}$ is finite dimensional, the vector space $V=\MM^{\perp}\cap V_{i\lambda}$
is finite dimensional, and consists of generalized eigenvectors of
$G$. That means 
\[
G|_{V}=A
\]
is a finite square matrix $A$. Putting $A$ into Jordan canonical
form shows us that there must be at least one honest eigenvector of
$G$ in $V$, still with eigenvalue $i\lambda$. Hence $V\cap V_{i\lambda}^{\#}\neq\emptyset$,
which is a contradiction. This shows that for every $i\lambda\in\sigma(G)$,
$V_{i\lambda}\subset\MM$, which proves 2. To prove part 3, we write
the resolvent as an integral: 
\[
(I-G)^{-1}=\int_{0}^{\infty}e^{-t}e^{tG}dt.
\]
Since $e^{tG}$ is a contraction semigroup, the integral converges
in operator norm, and indeed we can formally integrate by parts to
derive this formula. Since $e^{tG}:\MM^{\perp}\to\MM^{\perp}$, we
have that the resolvent 
\[
(I-G)^{-1}:\MM^{\perp}\to\MM^{\perp}
\]
as well, and is compact, since it is compact on the larger space $\HH$.

For the last part, we use the following abstract decay result \cite{BRT-D-damp}:
if $A$ is a maximal dissipative operator on a Hilbert space such
that $A$ has compact resolvent and no purely imaginary eigenvalues,
then 
\[
\lim_{t\to+\infty}e^{tA}=0,\quad\mbox{strongly.}
\]

The conclusion of 4 follows by considering $e^{tG}:\MM^{\perp}\to\MM^{\perp}$,
since we have seen above that $G|_{\MM^{\perp}}$ has no purely imaginary
eigenvalues. 
\end{proof}
As an immediate corollary of the above result, note that $\lim_{t\to+\infty}e^{tA}=0$
strongly if and only if $\MM=\emptyset$. To get Theorem \ref{thm: stabilization},
let $M=M_{\mu}$ as in Section \ref{S:compact-measure-mapping}. From
the discussion following Corollary \ref{cor: imag M}, $\MM\neq\emptyset$
if and only if there is a Dirichlet eigenfunction $\psi_{\lambda}$
of $-\Delta_{g}$ such that 
\[
\int|\psi_{\lambda}|d\mu=0,
\]
in which case $\psi_{\lambda}$ is a stationary solution of \eqref{E:sing-damp}
and $\psi_{\lambda}\in\MM$. 

\subsection{Spectral gap and exponential decay}

We say that the semigroup $(e^{tG})_{t\geq0}$ decays exponentially
if there is $C,\beta>0$ such that

\[
\|e^{tG}\|_{\mathcal{L}(\mathcal{H})}\leq Ce^{-\beta t},\qquad\forall t\geq0.
\]
Hence uniform stabilization is equivalent to an exponential decay
of the semigroup. On the other hand, we say that $G$ has a spectral
gap if there is $A>0$ such that $\forall\zeta\in\Spec G$, 
\[
|\Re\zeta|\geq A.
\]
Exponential decay of the semigroup and spectral gap are not necessarily
equivalent in non-selfadjoint spectral problems, due to pseudospectral
phenomena. However we have the following result : 
\begin{thm}
\label{thm: Gap} Let $A>0$ and consider the strip $\mathcal{S}=\{z\in\mathbb{C}:\Re z\in[-A,0]\}$. 

(i) If the operator $G$ has no spectral gap, then
\[
\|e^{tG}\|_{\mathcal{L}(\mathcal{H})}=1,\quad\forall t\geq0.
\]

(ii) Suppose that for some $A>0$ the strip $\mathcal{S}$ is in the
resolvent set of $G$, and that 
\[
\sup_{z\in\mathcal{S}}\|(z-G)^{-1}\|<\infty.
\]
Then the semigroup $(e^{tG})_{t\geq0}$ decays exponentially. 
\end{thm}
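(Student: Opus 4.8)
The plan is to treat the two parts separately, both resting on the contraction semigroup structure established earlier and on the generalities relating spectrum, resolvent growth, and semigroup norms.

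For part (i), the idea is a contradiction argument via the spectral mapping / resolvent identity for semigroups, combined with the dissipativity bound $\|e^{tG}\|_{\mathcal{L}(\mathcal{H})}\le 1$ already proved. Suppose $\|e^{t_0 G}\|<1$ for some $t_0>0$; since $t\mapsto \|e^{tG}\|$ is submultiplicative, this forces $\|e^{tG}\|\le C e^{-\beta t}$ for some $\beta>0$, i.e.\ exponential decay, and then by the standard Hille--Yosida-type estimate $\|(z-G)^{-1}\|\le (\Re z+\beta)^{-1}$ for $\Re z>-\beta$, so the strip $\{-\beta<\Re z\le 0\}$ lies in the resolvent set. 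In particular $G$ has a spectral gap of size $\beta$, contradicting the hypothesis. Hence $\|e^{tG}\|=1$ for all $t\ge 0$. The one subtlety to spell out is why ``no spectral gap'' is the negation of ``exponential decay plus spectrum bounded away from the imaginary axis'' rather than just a statement about the spectrum; the point is that $\sup\{\Re\zeta:\zeta\in\Spec G\}=0$ is compatible with the gap being zero, and the argument above shows exponential decay would itself produce a gap.

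For part (ii), this is precisely the Gearhart--Prüss--Huang theorem (equivalently the Arendt--Batty--Greiner framework): a $C_0$-semigroup $e^{tG}$ on a Hilbert space $\mathcal{H}$ satisfies $\|e^{tG}\|\le C e^{-\beta t}$ for some $\beta,C>0$ if and only if the half-plane $\{\Re z>-\beta'\}$ for some $\beta'>0$ lies in the resolvent set of $G$ and $\sup_{\Re z>-\beta'}\|(z-G)^{-1}\|<\infty$. The plan is to verify the hypotheses of that theorem from what we are given. We know $\Spec G\subset\{\Re z\le 0\}$, so the open right half-plane is already in the resolvent set and there $\|(z-G)^{-1}\|\le 1/\Re z$ by contractivity, which is bounded for $\Re z\ge 1$ but blows up as $\Re z\to 0^+$; the new input is the uniform bound on the closed strip $\mathcal{S}=\{\Re z\in[-A,0]\}$. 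Combining: on $\{\Re z\ge -A\}$ the resolvent is bounded (uniformly on $\{\Re z\ge 0\}$ away from $0$ by contractivity, near the imaginary axis and into the strip by hypothesis, and one patches the two regions since the resolvent is holomorphic and the relevant sets overlap in a compact-free-of-spectrum region), hence Gearhart--Prüss applies with any $\beta<A$ and gives the exponential decay. I would cite \cite{Kat95} or a standard semigroup reference for Gearhart--Prüss rather than reproving it.

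The main obstacle is bookkeeping rather than depth: one must be careful that the uniform resolvent bound genuinely extends from the strip $\mathcal{S}$ to a full right half-plane $\{\Re z>-A\}$, i.e.\ that the resolvent does not grow along vertical lines far out in the strip in a way not controlled by the stated $\sup_{z\in\mathcal{S}}$ — but this is exactly what the hypothesis ``$\sup_{z\in\mathcal{S}}\|(z-G)^{-1}\|<\infty$'' asserts, so there is nothing to prove there; for $\Re z\ge 0$ one uses $\|(z-G)^{-1}\|\le 1/\Re z\le 1$ once $\Re z\ge 1$, and for $0\le\Re z\le 1$ one can absorb this into the strip bound by a resolvent identity shifting the real part. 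So the only genuinely substantive ingredient is the Gearhart--Prüss theorem itself, which we invoke as a black box; everything else is an elementary consequence of $\|e^{tG}\|\le 1$ and $\Spec G\subset\{\Re z\le 0\}$.
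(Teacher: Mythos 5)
Your proposal is correct, and for part (ii) it rests on the same key ingredient as the paper, namely the Gearhart--Pr\"uss theorem, though in a different formulation: the paper applies Pr\"uss's result to conclude that the annulus $\{e^{-A}\le|z|\le1\}$ (whose preimage under $\exp$ is exactly the strip $\mathcal{S}$) lies in the resolvent set of the time-one operator $e^{G}$, so that the spectral radius of $e^{G}$ is at most $e^{-A}$ and exponential decay follows from $\limsup_k\|e^{kG}\|^{1/k}\le e^{-A}$ together with the semigroup property; you instead use the half-plane uniform-resolvent-bound version, which obliges you to patch the strip bound together with the contraction bound $\|(z-G)^{-1}\|\le 1/\Re z$ on $\{\Re z>0\}$ (the Neumann-series step off the imaginary axis covers $0\le\Re z<1/K$ with $K$ the strip bound, and $1/\Re z$ covers the rest, so your ``bookkeeping'' does close, modulo the constant $C$ you drop in the resolvent estimate). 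The paper's route avoids that patching entirely because the strip-to-annulus correspondence is exact. For part (i) your argument is genuinely different from what the paper intends: the paper treats it as immediate, the implicit reasoning being that no spectral gap plus compact resolvent yields eigenvalues $\zeta_j$ with $\Re\zeta_j\to0$, and the corresponding normalized eigenvectors give $\|e^{tG}\|\ge e^{t\Re\zeta_j}\to1$; you instead argue the contrapositive, that $\|e^{t_0G}\|<1$ at one time forces exponential decay by submultiplicativity, hence a uniform resolvent bound on a half-plane $\{\Re z>-\beta\}$ via the Laplace transform, hence a spectral gap. Your version is slightly longer but more robust, since it never uses that the spectrum of $G$ is pure point, while the paper's one-line argument exploits the eigenvector structure already established via compactness of $(I-G)^{-1}$. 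Both are valid proofs of the stated theorem.
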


\begin{proof}
It is clear that if there exists $(\zeta_{j})_{j\in\mathbb{N}}$ with
$\zeta_{j}\in\Spec G$ and $\Re\zeta_{j}\xrightarrow{j\to\infty}0$,
then $\|e^{tG}\|_{\mathcal{L}(\mathcal{H})}=1,\quad\forall t\geq0.$ 

Conversely, let us assume that there is $A>0$ such that $\forall\zeta\in\Spec G$,
$|\Re\zeta|\geq A.$ The preimage of $\mathcal{C}=\{z\in\mathbb{\mathbb{C}}:e^{-A}\leq|z|\leq1\}$
by the exponential map is precisely $\mathcal{S}$, and the uniform
boundedness of the resolvent on $\mathcal{S}$ allows us to apply
a theorem of Prüss \cite{Pru84} to conclude that $\C$ is in the
resolvent set of $e^{G}$. Hence the spectral radius of $e^{G}$ is
$\leq e^{-A}$, namely 
\[
\limsup_{k\to\infty}\|e^{kG}\|_{\mathcal{L}(\mathcal{H})}^{\frac{1}{k}}\leq e^{-A}<1.
\]
This implies that $\|e^{tG}\|_{\mathcal{L}(\mathcal{H})}<1$ for some
$t>0$ and concludes the proof of the theorem by using the semigroup
property.
\end{proof}
\bibliographystyle{alpha}
\bibliography{sing-damp_3-bib}
 
\end{document}